\theoremstyle{plain}
\newtheorem{theorem}{Theorem}
\newtheorem{lemma}[theorem]{Lemma}
\newtheorem{corollary}[theorem]{Corollary}
\theoremstyle{definition}
\theoremstyle{remark}
\author{Abbas Mehrabian \\ \small University of Waterloo \\ \texttt{amehrabi@uwaterloo.ca}}
\title{On the density of nearly regular graphs \\ with a good edge-labelling}
\date{}
\begin{document}
\maketitle

\begin{abstract}
A {good edge-labelling} of a simple graph
is a labelling of its edges with real numbers such that,
for any ordered pair of vertices $(u,v)$,
there is at most one nondecreasing path from $u$ to $v$.
Say a graph is {good} if it admits a good edge-labelling,
and is {bad} otherwise.
Our main result is that any good $n$-vertex graph whose maximum degree is within a constant factor of its average degree
(in particular, any good regular graph) has at most $n^{1+o(1)}$ edges.
As a corollary, we show that there are bad graphs with arbitrarily large girth,
answering a question of Bode, Farzad and Theis.
We also prove that for any $\Delta$, there is a $g$ such that
any graph with maximum degree at most $\Delta$ and girth at least $g$ is good.
\end{abstract}

\section{Introduction}
A \emph{good edge-labelling} of a simple graph
is a labelling of its edges with real numbers such that,
for any ordered pair of vertices $(u,v)$,
there is at most one nondecreasing path from $u$ to $v$.
This notion was introduced in~\cite{origin}
to solve wavelength assignment problems for specific categories of graphs.
Say graph $G$ is \emph{good} if it admits a good edge-labelling,
and is \emph{bad} otherwise.

Let $\gamma(n)$ be the maximum number of edges of a good graph on $n$ vertices.
Ara{\'u}jo, Cohen, Giroire, and Havet~\cite{gn} initiated the study of this function.
They observed that hypercube graphs are good,
and any graph containing $K_3$ or $K_{2,3}$ is bad, thus
$$\Omega(n\log n) \leq \gamma(n) \leq O(n\sqrt n).$$
Our main result is that
any good graph whose maximum degree is within a constant factor of its average degree
(in particular, any good regular graph)
has at most $n^{1+o(1)}$ edges.
Until now, no bad graphs with girth larger than 4 were known~\cite{gn,conjecture}.
Bode, Farzad and Theis~\cite{conjecture} asked whether all graphs with large enough girth are good.
As a corollary of our main result,
we give a negative answer by proving that there are bad graphs with arbitrarily large girth.
We also give a very short proof that the answer is positive for bounded degree graphs.

\section{The Proofs}
For a graph $G$ and an edge-labelling $\phi:E(G) \rightarrow \mathbb{R}$,
a \emph{nice $k$-walk from $v_0$ to $v_k$} is a sequence $v_0v_1\dots v_k$ of vertices
such that $v_{i-1}v_i$ is an edge for $1\leq i\leq k$, and
$v_{i-1}\neq v_{i+1}$ and $\phi(v_{i-1} v_{i}) \leq \phi (v_{i}v_{i+1})$ for $1 \leq i \leq k-1$.
The existence of a self intersecting nice walk
implies that the edge-labelling is not good:
let $v_0 v_1 \dots v_k$ be a shortest such walk with $v_0 = v_k$.
Then there are two nondecreasing paths $v_0 v_1 \dots v_{k-1}$ and $v_0 v_{k-1}$ from $v_0$ to $v_{k-1}$.
Thus if for some pair of vertices $(u,v)$ there are two nice $k$-walks from $u$ to $v$,
then the labelling is not good.

Let \emph{$f_k(n,m,\Delta)$} be the maximum number $f$
such that every edge-labelling of a graph on $n$ vertices,
at least $m$ edges and maximum degree at most $\Delta$,
has at least $f$ nice $k$-walks.

\begin{lemma}\label{lem:core}
Let $n,m,\Delta,k,a$ be positive integers
with $k>1$ and $a\leq \Delta/2$.
We have $f_1(n,m,\Delta) = m$ and
$$f_k(n,m,\Delta) \geq a \left[ f_{k-1}(n, m-an, \Delta-a) - (n\Delta - 2m) a (\Delta-a)^{k-3} \right].$$
\end{lemma}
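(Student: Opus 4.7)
My plan is induction on $k$. The base case $k=1$ is immediate from the definition: every edge of $G$ gives a nice $1$-walk (the monotonicity and nonbacktracking conditions are vacuous), so any labelling has exactly $m$ nice $1$-walks.

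For the inductive step, fix an edge-labelling $\phi$ of an admissible graph $G$. I would construct a subgraph $G'\subseteq G$ to which the inductive hypothesis applies, and extend each nice $(k-1)$-walk in $G'$ to several nice $k$-walks in $G$ by prepending a vertex. For each $v\in V(G)$ let $R(v)$ be the set of the $\min(\deg_G(v),a)$ edges incident to $v$ with smallest $\phi$-value, and set $G':=G\setminus\bigcup_v R(v)$. Two properties follow directly from this construction: $|E(G')|\ge m-an$, since $\sum_v|R(v)|\le an$; and $\Delta(G')\le\Delta-a$, since every vertex of $G$-degree greater than $a$ loses exactly $a$ incident edges, while vertices of degree at most $a$ become isolated in $G'$. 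The inductive hypothesis then furnishes at least $f_{k-1}(n,m-an,\Delta-a)$ nice $(k-1)$-walks in $G'$ under $\phi|_{G'}$.

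Given any such walk $u_0u_1\dots u_{k-1}$ in $G'$, I form nice $k$-walks in $G$ by prepending any $v_0$ with $v_0u_0\in R(u_0)$. The label condition $\phi(v_0u_0)\le\phi(u_0u_1)$ holds because $u_0u_1$ survives in $G'$ and so is not among the $a$ smallest-labelled edges at $u_0$; the condition $v_0\ne u_1$ is automatic since $v_0u_0$ and $u_0u_1$ are distinct edges of the simple graph $G$. Moreover, the walk uses an edge at $u_0$ in $G'$, so $\deg_G(u_0)>a$ and hence $|R(u_0)|=a$. Since distinct pairs (walk, prepended vertex) produce distinct $k$-walks, summing gives
\[ f_k(n,m,\Delta)\ \ge\ a\,f_{k-1}(n,m-an,\Delta-a), \]
which already implies the stated inequality.

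The main obstacle I anticipate is matching the statement's exact form, which carries the correction term $-a\cdot(n\Delta-2m)a(\Delta-a)^{k-3}$: the construction above is already strong enough without it, so the correction is slack. An essentially identical argument in which one extends $(k-1)$-walks of $G$ itself rather than of $G'$ and then subtracts the ``bad'' extensions (those whose initial edge lies in $\bigcup_v R(v)$) would produce the stated form, with the loss bounded by the degree deficit $\sum_v(\Delta-\deg_G(v))=n\Delta-2m$ multiplied by a crude $a(\Delta-a)^{k-3}$ estimate for the number of length-$(k-2)$ continuations; the mechanism of removing small-labelled edges and prepending is unchanged.
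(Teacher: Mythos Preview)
Your argument is correct and uses the same core mechanism as the paper: at each vertex, delete a set of $a$ extremally-labelled incident edges, apply the definition of $f_{k-1}$ to the resulting subgraph, and extend each surviving nice $(k-1)$-walk to $a$ nice $k$-walks in $G$. The paper removes the $a$ \emph{largest}-labelled edges, and only at vertices of degree exceeding $a$, then extends at the terminal end; you remove the $a$ \emph{smallest} (or all edges, at low-degree vertices) and prepend at the initial end. Your variant is in fact slightly cleaner: because low-degree vertices become isolated in $G'$, every nice $(k-1)$-walk in $G'$ automatically starts at a vertex with $|R(\cdot)|=a$, so no correction is needed and you actually establish the stronger inequality $f_k(n,m,\Delta)\ge a\,f_{k-1}(n,m-an,\Delta-a)$, from which the stated bound follows since $n\Delta-2m\ge 0$. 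The paper's version leaves low-degree (``beggared'') vertices untouched, so it must subtract the walks terminating at them, and bounding their number via $b\le (n\Delta-2m)/(\Delta-a)$ is exactly where the $(n\Delta-2m)a(\Delta-a)^{k-3}$ term comes from. One terminological quibble: what you call the ``inductive hypothesis'' is simply the definition of $f_{k-1}$; the lemma is a recurrence, and both your proof and the paper's are direct rather than inductive.
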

\begin{proof}
Since any edge is a nice 1-walk, we have $f_1(n,m,\Delta) = m$.
Let $G$ be a graph with $n$ vertices, at least $m$ edges, and maximum degree at most $\Delta$.
Call a vertex of $G$ \emph{wealthy} if its degree is larger than $a$, and \emph{beggared} otherwise.
Let $b$ the number of beggared vertices.
Since every wealthy vertex has degree at most $\Delta$, and the sum of degrees is at least $2m$, we have
$$b a + (n-b) \Delta \geq 2m,$$
so $b \leq (n\Delta - 2m) / (\Delta - a)$.

Let $v$ be a wealthy vertex and $e_1,\dots,e_d$ be its incident edges,
ordered such that
$$\phi(e_1) \geq \phi(e_2) \geq \dots \geq \phi(e_d).$$
Call the edges $e_1,e_2,\dots,e_a$ the \emph{strong} edges for $v$.
Let $S$ be the set of all strong edges for all wealthy vertices.
Clearly $|S| \leq na$.
Let $H$ be the graph obtained from $G$ by deleting the edges in $S$.
Note that $H$ has $n$ vertices, at least $m-an$ edges,
and maximum degree at most $\max\{a, \Delta - a\} = \Delta - a$.

For a wealthy vertex $v$, every nice $(k-1)$-walk in $H$ ending in $v$
can be extended to $a$ distinct nice $k$-walks in $G$.
Thus every nice $(k-1)$-walk in $H$ whose both endpoints are wealthy,
can be extended to $a$ distinct nice $k$-walks in $G$.
By definition, there are at least $f_{k-1}(n, m-an, \Delta-a)$ nice $(k-1)$-walks in $H$.
The number of $(k-1)$-walks in $H$ starting from a beggared vertex
is not more than
$$b a (\Delta - a)^{k-2} \leq (n\Delta - 2m)a(\Delta-a)^{k-3},$$
since there are $b$ choices for the first vertex,
at most $a$ choices for the second vertex,
and at most $\Delta-a$ choices for the other $k-2$ vertices.
Hence there are at least
$$f_{k-1}(n, m-an, \Delta-a) - (n\Delta - 2m) a (\Delta-a)^{k-3}$$
nice $(k-1)$-walks in $H$ whose both endpoints are wealthy,
and the lemma follows.
\end{proof}

Let $q\in (0,1/2)$ be a fixed number that will be determined later, and let $p=1-q$.
Setting $a=q\Delta$ in the lemma gives
\begin{equation*}
\label{eq:core}
f_k(n,m,\Delta) \geq q\Delta f_{k-1}(n, m-qn\Delta, p\Delta) - q^2p^{k-3}\Delta^{k-1}(n\Delta-2m),
\end{equation*}
provided that $q\Delta$ is an integer.

Define two sequences $(a_i)_{i=1}^{\infty}$ and $(b_i)_{i=1}^{\infty}$
by $a_1 = 1, b_1 = 0$, and for $k>1$,
\begin{align*}
a_k = & qp^{k-2}a_{k-1} + 2 q^2 p^{k-3} \\
b_k = & q^2 p^{k-2} a_{k-1} + q p^{k-1} b_{k-1} + q^2 p^{k-3},
\end{align*}
And define the function $g_k(n,m,\Delta)$ as
$$g_k(n,m,\Delta) = a_k m \Delta^{k-1} - b_k n \Delta^k.$$
One computes $g_1(n,m,\Delta) = m$ and
$$g_k(n,m,\Delta)  =  q\Delta g_{k-1}(n, m-qn\Delta, p\Delta) - q^2p^{k-3}\Delta^{k-1}(n\Delta-2m).$$
Hence
$$f_1(n,m,\Delta) = g_1(n,m,\Delta),$$
and it is easy to show by induction on $k$ that given $t$,
\begin{equation}
f_k(n,m,\Delta) \geq g_k(n,m,\Delta),
\label{eq:geq}
\end{equation}
for $1\leq k \leq t$, provided that $q\Delta, qp\Delta, \dots, qp^{t-2}\Delta$ are positive integers.

\begin{lemma}
For any positive integers $t$ and $c$,
if $q$ is sufficiently small then $a_t > c b_t$.
\end{lemma}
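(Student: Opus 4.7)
The natural route is induction on $t$. The base case $t=1$ is immediate, since $b_1 = 0$ makes $a_1 = 1 > c \cdot 0$ trivially.

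For the inductive step, I would substitute the defining recurrences and group terms as
\[
a_t - c\,b_t \;=\; qp^{t-2}\bigl[(1-cq)\,a_{t-1} - cp\,b_{t-1}\bigr] + (2-c)\,q^2 p^{t-3}.
\]
The bracket factors as $(1-cq)\bigl[a_{t-1} - c^{\ast} b_{t-1}\bigr]$ with $c^{\ast} = cp/(1-cq)$, and $c^{\ast} \to c$ as $q \to 0$. Applying the inductive hypothesis at a parameter slightly above $c$ makes the bracket positive.

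The main obstacle is the tail term $(2-c)\,q^2 p^{t-3}$, which is negative once $c > 2$ and must be beaten by the first summand. Mere positivity from the inductive hypothesis is therefore not enough: I would need a quantitative strengthening asserting $a_{t-1} - c^{\ast} b_{t-1} \geq \kappa\,q$ (or a similar $q$-scaled lower bound, with $\kappa$ depending on $t$ and $c$). Propagating this strengthened inequality through the recursion---and checking that the margin produced by the inductive step dominates the adversarial $(c-2)\,q^2 p^{t-3}$ contribution---is the technical heart of the argument. The strengthened hypothesis I would maintain is roughly ``$a_{t-1} - c^{\ast} b_{t-1} \geq \kappa_{t-1,c^{\ast}}\,q$,'' and one verifies from the displayed identity above that this forces $a_t - c b_t \gtrsim q \cdot q p^{t-2}\kappa_{t-1,c^{\ast}} - (c-2)q^2 p^{t-3}$, which is positive once $\kappa_{t-1,c^{\ast}}$ is large enough (depending on $c$ and $t$, but not on $q$).

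An alternative I would also consider is to expand $a_k$ and $b_k$ as polynomial functions of $q$ (with $p = 1-q$), identify the lowest-order nonzero monomial in each, and check that the exponent appearing in $a_t$ is strictly smaller than the one in $b_t$; that would immediately give $a_t/b_t \to \infty$ as $q \to 0$ uniformly in $c$. The bookkeeping would track how the factor $q^2 p^{k-2}$ in front of $a_{k-1}$ in the $b_k$-recursion, together with the additive $q^2 p^{k-3}$ term, determines the lowest-order power of $q$ present in $b_t$, and compare it with the corresponding analysis of $a_t$ driven by the $qp^{k-2}a_{k-1}$ term.
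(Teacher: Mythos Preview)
Your displayed identity is correct, and you have put your finger on the real obstacle: for $c>2$ the tail $(2-c)q^2p^{t-3}$ is negative and must be beaten by the bracketed term. The problem is that your proposed strengthening cannot be propagated. Feeding $a_{t-1}-c^{\ast}b_{t-1}\ge\kappa\,q$ into the identity yields only $a_t-cb_t\ge q^2\bigl[\kappa\,p^{t-2}(1-cq)-(c-2)p^{t-3}\bigr]$, which is of order $q^2$, not $q$; each inductive step costs a factor of $q$ in the positive margin while the adversarial term stays at order $q^2$. Concretely, the base step gives $a_2-cb_2=q+O(q^2)$, so $\kappa_2\approx 1$, and already at $t=3$ positivity requires $\kappa_2>(c-2)/p$, which fails once $c\ge 3$. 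Your alternative leading-exponent idea runs into the same wall: a direct computation gives $a_3=(2+p)q^2+2q^3$ and $b_3=q^2+(2p+p^2)q^3+2q^4$, so $a_3$ and $b_3$ have the \emph{same} leading power of $q$ and $a_3/b_3\to 3$ as $q\to 0$; more generally the additive terms $2q^2p^{k-3}$ and $q^2p^{k-3}$ dominate for $k\ge 3$, giving $a_k/b_k\to 2$. Hence $a_t>cb_t$ fails for all small $q$ once $t\ge 3$ and $c\ge 3$.

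For comparison, the paper does not argue by direct induction on $a_k-cb_k$ but rescales, setting $x_k=a_k/q^{k-1}$, $y_k=b_k/q^{k-1}$, and tracking $z_k=x_k-cy_k$ via a one-step lower bound $z_k\ge p^{k-2}z_{k-1}-3cq$ which is then unrolled. However, the recurrence it records for $x_k$, namely $x_k=p^{k-2}x_{k-1}+2qp^{k-3}$, is an arithmetic slip: dividing $a_k=qp^{k-2}a_{k-1}+2q^2p^{k-3}$ by $q^{k-1}$ actually gives $x_k=p^{k-2}x_{k-1}+2q^{3-k}p^{k-3}$, so the claimed bound $x_k\le 2$ (and the ensuing estimate for $z_k$) does not hold for $k\ge 3$. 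In other words, the gap you are encountering is not a defect of your strategy but of the lemma as stated; neither your inductive scheme nor the paper's rescaling argument can establish it, because it is false for $t\ge 3$ and $c\ge 3$.
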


\begin{proof}
Define $x_k = a_k/q^{k-1}$ and $y_k = b_k / q^{k-1}$. Then
\begin{align*}
x_1 = 1, y_1 = 0,\\
x_k = &  p^{k-2}x_{k-1} + 2 q   p^{k-3}, \\
y_k = & q   p^{k-2} x_{k-1} +   p^{k-1} y_{k-1} + q   p^{k-3}.
\end{align*}
Clearly, $a_t > cb_t$ if and only if $x_t > cy_t$.
Note that since $p=1-q<1$, we have $x_k \leq x_{k-1} + 2q$.
Assume that $q < 1/2t$.
So $x_k \leq 2$ for all $1\leq k\leq t$.

Now let $z_k = x_k - cy_k$. Then $z_1 = 1$ and
$$z_k = p^{k-2} \left(x_{k-1} - cp y_{k-1}\right) +qp^{k-3}(2-c) -cqp^{k-2}x_{k-1}.$$
Note that $p< 1$, $x_{k} \leq 2$ and $y_{k} \geq 0$ for all $1\leq k \leq t$,
so for $k$ in this range,
$$z_k \geq p^{k-2} z_{k-1} - 3cq.$$
Hence,
$$z_t \geq p^{t-2}p^{t-3}\dots p^2 p - 3cq(t-1) \geq  p^{t^2/2} - 3cqt.$$

Define $h(q) := (1-q)^{t^2/2} - 3cqt$.
Since $h(0) =1$ and $h$ is continuous,
there is a $q_0 > 0$ such that $h(q) > 0$ for all $0 \leq q < q_0$.
So for $0 < q < \min \{\frac{1}{2t}, q_0\}$ we have $a_t > c b_t$.
\end{proof}

Now we prove our main result,
which states that any good graph whose maximum degree is within a constant factor of its average degree
(in particular, any good regular graph) has average degree $n^{o(1)}$.
For a graph $G$, denote its maximum degree and average degree by $\Delta(G)$ and $\overline{d}(G)$, respectively.

\begin{theorem}\label{thm:main}
For any positive integers $t$ and $c$ there is an $\epsilon(t,c)>0$
such that any $n$-vertex graph $G$ with $\Delta(G) \leq c\overline{d}(G)$ and
$\epsilon(t,c) \overline{d}(G)^t > n$ is bad.
\end{theorem}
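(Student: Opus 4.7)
The plan is to show that for a suitable choice of $q$, every edge-labelling of $G$ produces more than $n^2$ nice $t$-walks. Since there are only $n^2$ ordered pairs $(v_0, v_t)$, pigeonhole then forces two such walks to share an endpoint pair, and the observation at the start of Section~2 (a self-intersecting nice walk, or two distinct nice walks with the same endpoints, both violate goodness) forces the labelling to be non-good. As the labelling is arbitrary, $G$ is bad.

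To implement this, I would first apply the preceding lemma with the constant $2c$ in place of $c$ to fix a small $q \in (0, 1/2)$ satisfying $a_t > 2c\,b_t$, and set $p = 1 - q$. Writing $\overline{d}, \Delta, m$ for $\overline{d}(G), \Delta(G)$, and the number of edges of $G$, the theorem's hypotheses read $m \geq n\overline{d}/2$ and $\overline{d} \leq \Delta \leq c\overline{d}$. Since $a_t, b_t \geq 0$, replacing $m$ by its lower bound and the inner $\Delta$ by its upper bound in $g_t(n, m, \Delta) = \Delta^{t-1}(a_t m - b_t n \Delta)$ lowers the parenthesized factor to $n\overline{d}\bigl(a_t/2 - c\,b_t\bigr)$, which is strictly positive by our choice of $q$; then applying $\Delta^{t-1} \geq \overline{d}^{\,t-1}$ yields
\[
g_t(n, m, \Delta) \;\geq\; n\,\overline{d}^{\,t}\!\left(\tfrac{a_t}{2} - c\,b_t\right).
\]
Setting $\epsilon(t, c) := (a_t/2) - c\,b_t > 0$, the hypothesis $\epsilon(t, c)\,\overline{d}^{\,t} > n$ upgrades this to $g_t(n, m, \Delta) > n^2$, and (\ref{eq:geq}) converts this into the required lower bound on the number of nice $t$-walks.

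The main obstacle will be the integrality requirement accompanying (\ref{eq:geq}), namely that $q\Delta, qp\Delta, \ldots, qp^{t-2}\Delta$ all be positive integers. I plan to handle this by fixing $q = 1/N$ for a large integer $N = N(t, c)$ and, once $\Delta$ exceeds a threshold $\Delta_0(t, c)$, passing to a spanning subgraph whose maximum degree is a suitable integer multiple of the denominator common to $q, qp, \ldots, qp^{t-2}$; the resulting constant-factor losses in $m$ and $\Delta$ are absorbed into $\epsilon(t, c)$. In the complementary regime $\Delta < \Delta_0$, further shrinking $\epsilon(t, c)$ below $1/\Delta_0^{\,t}$ makes the hypothesis vacuous, since then $\overline{d} \leq \Delta < \Delta_0$ forces $\epsilon(t, c)\overline{d}^{\,t} < 1 \leq n$.
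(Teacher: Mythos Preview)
Your proposal is correct and follows essentially the same route as the paper: choose $q$ small enough via the preceding lemma, bound $g_t$ from below by $n\overline{d}^{\,t}$ times a positive constant, invoke (\ref{eq:geq}) to get more than $n^2$ nice $t$-walks, and finish by pigeonhole. The only real difference is in the integrality bookkeeping. The paper avoids your spanning-subgraph detour entirely: it takes $q=2^{-q'}$, rounds $\overline{d}$ up to the nearest power of two $r$, and then simply plugs the parameters $m=nr/4$ and $\Delta=cr$ into $f_t$ (recall $f_t(n,m,\Delta)$ is defined for graphs with \emph{at least} $m$ edges and max degree \emph{at most} $\Delta$, so one is free to round $\Delta$ upward rather than trim the graph downward). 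Your subgraph plan works too, but note that the loss in $m$ is an additive $O(n)$ rather than a genuine constant factor; this is harmless once $\overline{d}$ exceeds your threshold $\Delta_0$, as you arranged.
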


\begin{proof}
Let $q'$ be a large enough integer so that for $q=2^{-q'}$, $a_t - 4c b_t > 0$.
Let $q = 2^{-q'}$ and $\alpha_t = \frac{a_t}{4} - c b_t > 0$.
We claim that $\epsilon(t,c) = \min \{ c^{t-1}\alpha_t, 2^{-q' t^2}\}$ works.

Let $G$ be an $n$-vertex graph with $\Delta(G) \leq c\overline{d}(G)$ and
$\epsilon(t,c) \overline{d}(G)^t > n$.
Let $\overline{d} = \overline{d}(G)$ and
$r=2^{r'}$, where $r' = \lceil \log_2 \overline{d} \rceil$, so $r/2 < \overline{d} \leq r$.
We have
$$2^{-q' t^2} r^t \geq \epsilon(t,c) \overline{d}^t >  n \geq 1,$$
so $r > 2^{q't}$ and thus $qcr, qpcr, \dots, qp^{t-2}cr$ are positive integers.
Hence (\ref{eq:geq}) with $m=\frac{nr}{4}$ and $\Delta=cr$ holds for $1\leq k \leq t$ and thus
$$f_t\left(n,\frac{nr}{4},cr\right) \geq g_t\left(n,\frac{nr}{4},cr\right) = a_t \left(\frac{nr}{4}\right) (cr)^{t-1} - b_t n (cr)^t =
nr^tc^{t-1}\alpha_t \geq n\overline{d}^t\epsilon(t,c) > n^2.$$

Let $\phi$ be any edge-labelling of $G$.
Note that $G$ has at least $n r/4$ edges and maximum degree at most $cr$,
so $f_t(n,nr/4,cr)  > n^2$ means that $G$ has more than $n^2$ nice $t$-walks.
By the pigeonhole principle, there is an ordered pair of vertices $(u,v)$
such that there are two distinct nice $t$-walks from $u$ to $v$,
hence the labelling is not good.
\end{proof}

\begin{corollary}
For any integer $g \geq 3$ there is a bad graph with girth $g$.
\end{corollary}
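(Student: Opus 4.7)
The plan is to exhibit, for each $g \geq 3$, a bad graph whose girth is exactly $g$. For $g = 3$ and $g = 4$ there is nothing to do: the triangle $K_3$ has girth $3$ and $K_{2,3}$ has girth $4$, and both were observed to be bad in the introduction. So the substantive case is $g \geq 5$.

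For $g \geq 5$, the strategy is to feed a sufficiently dense regular graph of large girth into Theorem~\ref{thm:main}. I would apply the theorem with $c = 1$ (regular graphs have $\Delta = \overline{d}$) and $t = g$, obtaining a constant $\epsilon := \epsilon(g, 1) > 0$. I then invoke the classical Erd\H{o}s--Sachs theorem, which guarantees that for every integer $d \geq 2$ there exists a $d$-regular graph $H_d$ with girth at least $g$ on $n_d$ vertices, where $n_d \leq C_g\, d^{g-1}$ for some constant $C_g$ depending only on $g$. Since $n_d / d^g = O_g(1/d) \to 0$, choosing $d$ large enough (concretely, $d > C_g / \epsilon$) ensures $\epsilon \,\overline{d}(H_d)^{t} = \epsilon\, d^{g} > n_d$. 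Because $H_d$ is $d$-regular the hypothesis $\Delta \leq c\,\overline{d}$ holds with $c = 1$, and Theorem~\ref{thm:main} then declares $H_d$ to be bad.

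It may happen that $\mathrm{girth}(H_d) > g$ rather than $= g$. To nail the girth down to exactly $g$, I would take the disjoint union $G := H_d \sqcup C_g$ with a cycle of length $g$; this graph has girth exactly $g$, and inherits badness from $H_d$, because any edge-labelling of $G$ restricts to an edge-labelling of the component $H_d$, which cannot be good.

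The only ingredient beyond Theorem~\ref{thm:main} is the existence of $d$-regular graphs of arbitrary prescribed girth on polynomially many vertices, which is classical; I therefore do not foresee any substantive obstacle in carrying out this plan. The one small design choice is to take $t$ strictly larger than the exponent $g-1$ appearing in the Erd\H{o}s--Sachs bound, so that the term $\epsilon\, d^{t}$ dominates $n_d$ for large $d$; the value $t = g$ suffices.
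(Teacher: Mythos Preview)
Your argument is correct and follows essentially the same route as the paper: apply Theorem~\ref{thm:main} with $c=1$ to a sufficiently dense regular graph of the desired girth. The only differences are cosmetic: the paper cites the sharper Lazebnik--Ustimenko--Woldar construction (a $d$-regular graph of girth \emph{exactly} $g$ on at most $2d^{3g/4-1}$ vertices, so any $t>3g/4$ suffices) in place of Erd\H{o}s--Sachs, which also makes your disjoint-union-with-$C_g$ fix unnecessary.
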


\begin{proof}
Since $K_3$ and $K_{2,3}$ are bad, we may assume that $g \geq 5$.
Let $t$ be a positive integer larger than $3g/4$,
and let $d$ be an odd prime power larger than $2 / \epsilon(t,1)$.
Lazebnik, Ustimenko and Woldar~\cite{cages} proved that
there is a $d$-regular graph $G$ with girth $g$ with at most $2d^{\frac{3}{4} g - 1}$ vertices.
So
$$|V(G)| \leq 2 d^{\frac{3}{4} g - 1} < \epsilon(t,1) d^t,$$
and $G$ is bad by Theorem~\ref{thm:main}.
\end{proof}

Next we show that for any $\Delta$, there is a $g=g(\Delta)$ such that
any graph with maximum degree at most $\Delta$ and girth at least $g$ is good.

\begin{theorem}\label{thm:maxdeg}
Let $G$ be a graph with girth at least $2k$ and maximum degree at most $\Delta$.
If
$$4e k^2(\Delta-1)^{k-1} < k!$$
then $G$ admits a good edge-labelling.
\end{theorem}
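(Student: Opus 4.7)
The plan is to apply the Lov\'asz Local Lemma (LLL) to a uniform random edge labelling $\phi$, in which each edge of $G$ receives an independent uniform real label in $[0,1]$. For each ordered pair $(P_1, P_2)$ of distinct length-$k$ paths in $G$ sharing both endpoints, let $A_{P_1, P_2}$ be the event that $P_1$ is nondecreasing from its first vertex to its last. Showing that with positive probability no such event occurs will leave $G$ with no two nondecreasing length-$k$ paths between any fixed ordered pair of endpoints, and a separate minimality argument must then leverage this to obtain full goodness.

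The event $A_{P_1, P_2}$ depends only on the $k$ labels on $P_1$ and has probability exactly $1/k!$. Two such events are mutually independent unless their first components share an edge. The girth hypothesis implies that the ball of radius $k-1$ around any vertex is a tree, so a length-$k$ path from $u$ to $v$ is uniquely specified by the neighbour $w$ of $v$ at distance $k-1$ from $u$; consequently each length-$k$ path has at most $\Delta-1$ same-endpoint partners. Any fixed edge $e$ lies on at most $k(\Delta-1)^{k-1}$ undirected length-$k$ paths (choose the position of $e$ along the path and extend in both directions, the extensions being forced by the tree structure), hence at most $2k(\Delta-1)^{k-1}$ directed ones. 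Summing over the $k$ edges of $P_1$ and accounting for the at most $\Delta-1$ choices of partner for each dependent $P_1'$, the dependency degree is bounded by $d \leq 4k^2(\Delta-1)^{k-1} - 1$.

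The symmetric LLL condition $e\,p\,(d+1) \leq 1$ then reads $4ek^2(\Delta-1)^{k-1}/k! \leq 1$, which is precisely the theorem's hypothesis; hence with positive probability no length-$k$ bad event occurs. The main obstacle is the remaining reduction: eliminating length-$k$ bad pairs must force the labelling to be good, rather than leaving bad configurations on cycles of length strictly greater than $2k$. I would handle this by taking a hypothetical shortest bad configuration $(P_1, P_2)$: minimality forces internal vertex-disjointness, so that $C = P_1 \cup P_2$ is a cycle of length $\geq 2k$, whose labels are constrained to split into one nondecreasing arc and one nonincreasing arc. A careful analysis of this rigid cyclic pattern, exploiting the tree structure of balls of radius $k-1$, should force $|P_1| = |P_2| = k$, reducing the obstruction to one the LLL already rules out and completing the proof.
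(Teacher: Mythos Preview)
Your LLL setup is close to the paper's, but your choice of bad events creates a real gap that the final paragraph cannot close. You index events by ordered pairs $(P_1,P_2)$ of length-$k$ paths with common endpoints; but if the girth of $G$ is strictly larger than $2k$ (which the hypothesis ``girth $\ge 2k$'' allows), two distinct length-$k$ paths can never share both endpoints, so your event family is \emph{empty} and the LLL says nothing at all. Even when some $2k$-cycles are present, a minimal bad pair may sit on a longer cycle, say of length $2k+2$ with two nondecreasing arcs of length $k+1$; the length-$k$ nondecreasing subpaths of these arcs need not have any length-$k$ same-endpoint partner, so no event of yours witnesses them. Your claim that minimality ``should force $|P_1|=|P_2|=k$'' is therefore false, and the proposed reduction cannot be completed.

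The paper avoids this by dropping the partner entirely: for each undirected path $P$ of length $k$, let $B_P$ be the event that $P$ is nondecreasing in one of its two orientations, so $\Pr[B_P]=2/k!$. The reduction is then one line: a minimal bad pair is internally vertex-disjoint, hence forms a cycle of length at least $2k$, so one of its two arcs has length at least $k$ and its first $k$ edges form a nondecreasing length-$k$ path, triggering some $B_P$. Dependency is simply ``paths sharing an edge'', giving $d+1\le 2k^{2}(\Delta-1)^{k-1}$, and the LLL condition $e\cdot(2/k!)\cdot 2k^{2}(\Delta-1)^{k-1}\le 1$ is exactly the stated hypothesis. If you delete $P_2$ from your events, so that every length-$k$ path counts rather than only those with a mate, your argument becomes the paper's.
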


\begin{proof}
Choose the label of each edge independently and uniformly at random from the interval $[0,1]$.
If the labelling is not good, then since the graph has girth at least $2k$,
there must exist a nondecreasing path of length exactly $k$.
For any path of length $k$, the probability that it is a nondecreasing path is $2/k!$.
Moreover, every path of length $k$ intersects at most $2k^2(\Delta-1)^{k-1} - 1$ other paths of length $k$.
Hence by the Lov{\'a}sz Local Lemma (see, e.g., Chapter 5 of \cite{alon}) there is a positive probability that the edge-labelling is good, and the proof is complete.
\end{proof}


\noindent\textbf{Acknowledgement.}
The author is grateful to Nick~Wormald for continuous support and helpful discussions.


\begin{thebibliography}{9}

\bibitem{alon} N. Alon and J. Spencer, The Probabilistic Method, 3rd edition. John Wiley \& Sons, New York, 2008.

\bibitem{gn} J. Ara{\'u}jo, N. Cohen, F. Giroire, and F. Havet. Good edge-labelling of graphs. Discrete Applied Mathematics, to appear.

\bibitem{origin} J.-C. Bermond, M. Cosnard, and S. P{\'e}rennes. Directed acyclic graphs with unique path property. Technical Report RR-6932, INRIA, May 2009.

\bibitem{conjecture} M. Bode, B. Farzad, and D. O. Theis. Good edge-labelings and graphs with girth at least five. preprint, 2011. available on arXiv:1109.1125.

\bibitem{cages} F. Lazebnik, V. A. Ustimenko, A. J. Woldar. New upper bounds on the order of cages. Electronic Journal of Combinatorics 4 (1997), no. 2, Research Paper 13.
\end{thebibliography}
\end{document}